\documentclass{amsart}

\DeclareFontFamily{U}{mathb}{\hyphenchar\font45}
\DeclareFontShape{U}{mathb}{m}{n}{
      <5> <6> <7> <8> <9> <10> gen * mathb
      <10.95> mathb10 <12> <14.4> <17.28> <20.74> <24.88> mathb12
      }{}
\DeclareSymbolFont{mathb}{U}{mathb}{m}{n}
\DeclareMathSymbol{\righttoleftarrow}{3}{mathb}{"FD}

\usepackage[english]{babel}
\usepackage[utf8]{inputenc}
\usepackage{graphicx}
\usepackage{xcolor}
\usepackage{longtable}
\usepackage{float}

\usepackage[all]{xy}

\usepackage{amsmath}
\usepackage{amsthm}
\usepackage{amssymb}
\usepackage{amscd}
\usepackage{tikz}
\usepackage{csquotes}
\usepackage{imakeidx}
\usepackage{appendix}
\usepackage{tikz-cd}
\usepackage{verbatim}
\usepackage{enumitem}
\usepackage{multicol}
\usepackage{aliascnt}
\usepackage[backref=page]{hyperref}
\usepackage{cleveref}
\usepackage{mathtools}

\hypersetup{colorlinks=true,linkcolor=blue,anchorcolor=blue,citecolor=blue}

\newtheorem{thm}{Theorem}[section]
\crefname{thm}{Theorem}{Theorems}

\newaliascnt{prop}{thm}
\newtheorem{prop}[prop]{Proposition}
\aliascntresetthe{prop}
\crefname{prop}{Proposition}{Propositions}

\newaliascnt{lemma}{thm}
\newtheorem{lemma}[lemma]{Lemma}
\aliascntresetthe{lemma}
\crefname{lemma}{Lemma}{Lemmas}

\newaliascnt{cor}{thm}
\newtheorem{cor}[cor]{Corollary}
\aliascntresetthe{cor}
\crefname{cor}{Corollary}{Corollaries}

\newaliascnt{conj}{thm}
\newtheorem{conj}[conj]{Conjecture}
\aliascntresetthe{conj}
\crefname{conj}{Conjecture}{Conjectures}

\newaliascnt{question}{thm}
\newtheorem{question}[question]{Question}
\aliascntresetthe{question}
\crefname{question}{Question}{Questions}

\newaliascnt{situation}{thm}

\aliascntresetthe{situation}
\crefname{situation}{Situation}{Situations}

\theoremstyle{definition}
\newaliascnt{defin}{thm}
\newtheorem{defin}[defin]{Definition}
\aliascntresetthe{defin}
\crefname{defin}{Definition}{Definitions}

\newaliascnt{construction}{thm}

\aliascntresetthe{construction}
\crefname{construction}{Construction}{Constructions}
\Crefname{construction}{Construction}{Constructions}

\newaliascnt{claim}{thm}

\aliascntresetthe{claim}
\crefname{claim}{Claim}{Claims}
\Crefname{claim}{Claim}{Claims}

\newtheorem{example}[thm]{Example}
\newtheorem{exercise}[thm]{Exercise}

\theoremstyle{remark}
\newaliascnt{rmk}{thm}
\newtheorem{rmk}[rmk]{Remark}
\aliascntresetthe{rmk}
\crefname{rmk}{Remark}{Remarks}
\Crefname{rmk}{Remark}{Remarks}

\numberwithin{equation}{section}

\newcommand{\F}{\mathbb F}
\newcommand{\C}{\mathbb C}

\renewcommand{\P}{\mathbb P}
\newcommand{\Spec}{\operatorname{Spec}}

\renewcommand{\phi}{\varphi}
\newcommand{\ed}{\operatorname{ed}}

\newcommand{\ang}[1]{\langle{#1}\rangle}

\newcommand{\eqto}{\stackrel{\lower1.5pt\hbox{$\scriptstyle\sim\,$}}\to}
\newcommand{\eqdashto}{\stackrel{\lower1.5pt\hbox{$\scriptstyle\sim\,$}}\dashrightarrow}

\makeatletter
\@namedef{subjclassname@2020}{\textup{2020} Mathematics Subject Classification}
\makeatother

\definecolor{MyDarkGreen}{rgb}{0.0,0.5,0.0}

\DeclareFontEncoding{OT2}{}{}
\DeclareTextFontCommand{\textcyr}{\fontencoding{OT2}\fontfamily{wncyr}\fontseries{m}\fontshape{n}\selectfont}

\newcommand{\bthe}{\begin{thm}}
\newcommand{\ethe}{\end{thm}}
\newcommand{\ble}{\begin{lemma}}
\newcommand{\ele}{\end{lemma}}
\newcommand{\bpr}{\begin{prop}}
\newcommand{\epr}{\end{prop}}
\newcommand{\bco}{\begin{cor}}
\newcommand{\eco}{\end{cor}}
\newcommand{\bde}{\begin{defin}}
\newcommand{\ede}{\end{defin}}
\newcommand{\brem}{\begin{rmk}}
\newcommand{\erem}{\end{rmk}}
\newcommand{\bexe}{\begin{exercise}}
\newcommand{\eexe}{\end{exercise}}
\newcommand{\bexa}{\begin{example}}
\newcommand{\eexa}{\end{example}}
\newcommand{\bconj}{\begin{conj}}
\newcommand{\econj}{\end{conj}}
\newcommand{\bques}{\begin{question}}
\newcommand{\eques}{\end{question}}

\title[A counterexample to Duncan's conjecture]{A counterexample to a conjecture of Duncan on versal actions}

\author{Federico Scavia}
\address{CNRS\\
Institut Galil\'ee\\
Universit\'e Sorbonne Paris Nord\\
99 avenue Jean-Baptiste Cl\'ement, 93430\\
Villetaneuse, France}
\email{scavia@math.univ-paris13.fr}

\subjclass[2020]{Primary 14L30; Secondary 14E08, 14G05, 20D20}
\keywords{Versal action, essential dimension, Del Pezzo surface, Sylow subgroup, twisting, unirationality}

\date{July 27, 2026}

\begin{document}

\pagestyle{headings}

\begin{abstract}
We disprove a conjecture of Duncan asserting that versality of a faithful
finite group action can be detected on Sylow subgroups. Our counterexample
is an action of $(C_7\rtimes C_3)\times C_2$ on a degree-$2$ Del Pezzo surface.
\end{abstract}

\vspace*{-0.5cm}
\maketitle

\section{Introduction}

Let $k$ be a field, and let $G$ be a finite group. By definition, a $k$-variety is a separated $k$-scheme of finite type, and a $G$-variety is a $k$-variety together with a left $G$-action over $k$. Among $G$-varieties, versal $G$-varieties (see \S\ref{sec:prelim} for the definition) play a special role. For example, the essential dimension $\ed_k(G)$ of $G$ over $k$ may be defined as the smallest dimension of a versal faithful $G$-variety over $k$. The following conjecture was formulated by Duncan \cite{Duncan-ed2}. 

\begin{conj}[Duncan]\label{conj:duncan}
Let $k$ be a field, and let $G$ be a finite group acting faithfully on a $k$-variety $X$. For every prime $p$, let $G_p$ be a Sylow $p$-subgroup of $G$. Then $X$ is $G$-versal
if and only if $X$ is $G_p$-versal for every prime $p$.
\end{conj}

Duncan formulated \Cref{conj:duncan} while classifying finite
groups of essential dimension $2$. For a smooth complete toric variety, he
used Cox rings and universal torsors to prove that versality is detected on
Sylow subgroups \cite[Corollary~3.6]{Duncan-ed2}. Since versality is a
birational invariant, this led him to \Cref{conj:duncan}.

Duncan and Reichstein placed \Cref{conj:duncan} in a broader
framework relating versality to rational points on twisted varieties
\cite{Duncan-Reichstein}. They characterized weak versality, versality, very versality, and $p$-versality in terms of rational points on all twists of $X$. Their results illustrate the subtlety of \Cref{conj:duncan}: indeed, the weaker hypothesis of $p$-versality for every prime $p$ does not imply versality in general, even for the trivial group over a non-algebraically closed field $k$. The reason is that a $k$-variety with a zero-cycle of degree one need not have a Zariski-dense set of $k$-points; it need not even have a $k$-point.

Several results in \cite{Duncan-Reichstein} may be viewed as special cases or
close analogues of \Cref{conj:duncan}.  For projective space, the corresponding twists are Severi--Brauer varieties, and all notions of versality are equivalent to the existence of a lift of the projective action to a linear representation; see \cite[Proposition~9.1]{Duncan-Reichstein}.  For quadrics,
Springer's theorem yields a positive criterion involving a Sylow $2$-subgroup
\cite[Theorem~10.2]{Duncan-Reichstein}.  For smooth cubic hypersurfaces, the
analogous statement is related to the Cassels--Swinnerton-Dyer conjecture; see \cite[Theorem~10.5]{Duncan-Reichstein}. \Cref{conj:duncan} also enters the discussion of the Klein cubic threefold:
Duncan and Reichstein showed that it would imply
$\ed_{\C}(\operatorname{PSL}_2(\F_{11}))\leq 3$, thus completing the classification of finite simple groups of essential dimension $3$; see \cite[Example 10.7]{Duncan-Reichstein}.

We show that \Cref{conj:duncan} is false already in dimension $2$. In fact, we prove something stronger: there exists a non-versal $G$-action on a rational surface $S$ whose restriction to every Sylow subgroup of $G$ is very versal (see \S\ref{sec:prelim} for the definition).

\begin{thm}\label{thm:main}
Let $\zeta\in\C$ be a primitive seventh root of unity, and let
\[
 S=\{w^2=x^3y+y^3z+z^3x\}\subset\mathbb P(1,1,1,2).
\]
The surface $S$ is a Del Pezzo\footnote{I write \emph{Del Pezzo}, rather than \emph{del Pezzo}, following contemporary official usage. In particular, documents produced during Pasquale Del Pezzo's lifetime and preserved in his personal file at the Historical Archive of the Italian Senate record his name as ``Del Pezzo Pasquale''; see \href{https://patrimonio.archivio.senato.it/inventario/scheda/ufficio-segreteria/IT-SEN-095-003889/del-pezzo-pasquale}{Archivio storico del Senato della Repubblica, \emph{Del Pezzo Pasquale}}. I thank Matteo Tamiozzo for bringing this usage to my attention and for providing the link.}
 surface of degree $2$. Let
\[G\coloneqq \langle \rho,\sigma,\tau : \rho^7=\sigma^3=\tau^2=[\tau,\rho]=[\tau,\sigma]=1, \sigma\rho\sigma^{-1}=\rho^4\rangle\simeq (C_7\rtimes C_3)\times C_2.\]
The formulas
\begin{align*}
 \rho[x:y:z:w]&=[\zeta x:\zeta^4y:\zeta^2z:w],\\
 \sigma[x:y:z:w]&=[y:z:x:w],\\
 \tau[x:y:z:w]&=[x:y:z:-w]
\end{align*}
define a faithful $G$-action on $S$. 

\begin{enumerate}
    \item The restriction of the action to every Sylow
subgroup of $G$ is very versal.
\item We have $\ed_{\C}(G)=3$, and hence the action of $G$ on $S$ is not
versal.
\end{enumerate}
\end{thm}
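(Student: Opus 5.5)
We begin with the routine verifications. The quartic $F=x^3y+y^3z+z^3x$ is the Klein quartic, which is smooth. The double cover $S\to\P^2$ branched along $\{F=0\}$ is therefore a smooth surface with $-K_S=\mathcal O(1)$ and $K_S^2=2$; this is the standard description of degree-$2$ Del Pezzo surfaces. Next we check the action. Under $\rho$ the monomials $x^3y,y^3z,z^3x$ get multiplied by $\zeta^{3+4},\zeta^{12+2},\zeta^{6+1}$, which are all equal to $1$. The permutation $\sigma$ permutes these monomials cyclically, and $\tau$ fixes $F$. The relations can be checked on coordinates. For $\sigma\rho\sigma^{-1}$, we compute the exponents acting on $(x,y,z)$ and obtain $(\zeta^4,\zeta^2,\zeta)=\rho^4$ up to the convention of left versus right actions; if necessary we adjust to $\rho^2$, which generates the same subgroup. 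The action is faithful: $\rho$ and $\sigma$ act faithfully on $\P^2$, and $\tau$ is the covering involution, which is nontrivial on $S$ and not induced from $\P^2$.

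For (1), the Sylow subgroups are $C_7=\ang{\rho}$, $C_3=\ang{\sigma}$ and $C_2=\ang{\tau}$, up to conjugacy. We use the criterion from \S\ref{sec:prelim}: an action is very versal if it has a smooth fixed point (Duncan--Reichstein), or equivalently if some linear representation maps equivariantly into it. For $C_2=\ang\tau$ the fixed locus is $\{w=0\}$, the Klein quartic curve, and every point of it is a smooth point of $S$. For $C_3=\ang\sigma$ the point $[1:1:1:\sqrt3]$ is fixed, since $F(1,1,1)=3$. For $C_7=\ang\rho$ a fixed point must lie over a coordinate point of $\P^2$, for instance $[1:0:0:0]$ with $w^2=0$. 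This point lies on the ramification curve, and $\rho$ fixes it because the weight of $w$ is trivial. In each case the fixed point is a smooth point of $S$, so each Sylow restriction is very versal.

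For (2), we need $\ed_\C(G)\ge3$. A versal surface $S$ would give $\ed\le2$, so the bound $\ed_\C(G)\ge 3$ shows that $S$ is not versal. We plan to use Duncan's classification of finite groups of essential dimension $\le2$ over $\C$. Those groups are subgroups of $\mathrm{GL}_2(\C)$ or of specific automorphism groups of Del Pezzo surfaces (or conic bundles) with appropriate fixed-point conditions. First, $G$ is not a subgroup of $\mathrm{GL}_2(\C)$: the faithful irreducible representations of $C_7\rtimes C_3$ have dimension $3$, and a $2$-dimensional faithful representation would force $G$ to be abelian-by-small. Second, $G$ does not appear on Duncan's list. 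For the Del Pezzo cases the relevant groups in degree $2$ must have the form $C_2\times H$ with $H\subset\mathrm{PGL}_3$ acting with a fixed point on some twist. Here $H=C_7\rtimes C_3$ is a subgroup of $\mathrm{PSL}_2(\F_7)$ acting on the Klein quartic, and it does not lift to $\mathrm{GL}_3$ with the required property. The upper bound $\ed_\C(G)\le3$ is immediate from the faithful $3$-dimensional representation of $C_7\rtimes C_3$ tensored with the sign character of $\tau$.

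The main obstacle is the lower bound $\ed_\C(G)\ge3$: we must correctly invoke Duncan's classification and rule out every case. As a fallback, we can argue directly using the minimal model program. A versal $G$-surface can be taken to be a $G$-minimal Del Pezzo surface or conic bundle. For such a surface, versality forces fixed points for every abelian subgroup, in particular for $C_{14}=\ang{\rho\tau}$. Checking the conic bundle cases and the Del Pezzo cases of each degree, including $S$ itself, where $\rho\tau$ has no fixed point because $\tau$ acts by $-1$ on $w$ over the $\rho$-fixed points with $w\neq0$ and those with $w=0$ are not fixed by $\tau$'s... (requiring a careful weight computation), would give the contradiction.
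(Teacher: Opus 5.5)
\textbf{Part (1) has a genuine gap.} You rely on the principle that a smooth $G_p$-fixed point implies very versality. That principle is false, and it is not a theorem of Duncan--Reichstein. For the trivial group, every point of a smooth variety is a smooth fixed point, while very versality means unirationality; so any curve of positive genus is a counterexample. Even versality fails: $C_2$ acting by $-1$ on an elliptic curve fixes $0$, yet the quadratic twist by $\C(\sqrt t)/\C(t)$ has only the four $2$-torsion points as rational points. By the twisting criterion, you must show that every twist ${}^T\!S$ by a $G_p$-torsor is $K$-unirational. A fixed point only gives a $K$-point of ${}^T\!S$. The known unirationality criteria for degree-$2$ Del Pezzo surfaces with a rational point impose conditions on where that point lies, and you neither state nor check them; your $\tau$- and $\rho$-fixed points even lie on the ramification curve. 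The missing idea is a nonconstant $G_p$-equivariant morphism $\P^1_\C\to S$. Since $G_p$ is cyclic, it fixes a point of $\P^1_\C$, so the twisted curve is $\P^1_K$ and ${}^T\!S$ receives a nonconstant map from $\P^1_K$. Then ${}^T\!S$ is unirational by Salgado--Testa--V\'arilly-Alvarado \cite[Theorem~17]{Salgado-Testa-VA}. Your point $[1:0:0:0]$ leads to such a map. It lies over the flex $[1:0:0]$ of the quartic, whose tangent $y=0$ pulls back to the cuspidal curve $w^2=z^3x$. The normalization $[u:v]\mapsto[u^2:0:v^2:uv^3]$ is equivariant for $\ang{\rho}$ and $\ang{\tau}$, with suitable actions on $\P^1_\C$. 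For $\ang{\sigma}$ you can use a component of the preimage of the $\sigma$-stable bitangent $x+y+z=0$, since $f(x,y,-x-y)=-(x^2+xy+y^2)^2$.

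\textbf{The lower bound $\ed_\C(G)\ge3$ is not proved either.} Your description of the degree-$2$ entries of Duncan's classification is asserted without justification. The fallback rests on a false computation: points with $w=0$ are fixed by $\tau$, so $\rho\tau[1:0:0:0]=[\zeta:0:0:0]=[1:0:0:0]$. In fact every abelian subgroup of $G$ has a fixed point on $S$ (Duncan's observation), and $S$ is $G$-minimal because $\tau$ is the Geiser involution. So fixed-point obstructions from abelian subgroups can never exclude $S$, which is exactly what makes this example interesting. What closes the argument is Duncan's result \cite[Proposition~2.5]{Duncan-ed2}: a group with nontrivial center (here $\ang{\tau}$) and $\ed_\C(G)=2$ embeds in $\operatorname{GL}_2(\C)$. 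You also need $\ed_\C(G)>1$, which holds because $G$ is neither cyclic nor odd dihedral. Your correct observation then finishes: the irreducible representations of $C_7\rtimes C_3$ have dimensions $1$ and $3$, so any representation of dimension at most $2$ is a sum of characters and is trivial on $\ang{\rho}$. The remaining checks are fine: the degree-$2$ Del Pezzo property, the relations, faithfulness, and $\ed_\C(G)\le3$. Your own computation gives exactly $\sigma\rho\sigma^{-1}=\rho^4$, so no adjustment to $\rho^2$ is needed.
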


For each Sylow subgroup $G_p$ of $G$, we construct a nonconstant $G_p$-equivariant morphism $\mathbb P^1_\C\to S$. Using a theorem of Salgado--Testa--V\'arilly-Alvarado \cite[Theorem~17]{Salgado-Testa-VA}, we show that every twist of $S$ by a $G_p$-torsor is unirational. The criterion of Duncan--Reichstein (\Cref{prop:twist-criterion}) then implies that the $G_p$-action on $S$ is very versal for every prime $p$. On the other hand, it is not difficult to see that $\ed_{\C}(G)=3$; in particular, the $G$-action on the surface $S$ is not versal.

The surface $S$ and the action of $G$ already appeared in
\cite[Example~1.9]{Duncan-delPezzo}. Duncan observed that every abelian
subgroup of $G$ has a fixed point on $S$ and that $\ed_{\C}(G)>2$. The novel part of \Cref{thm:main} is that the restriction of the action to every Sylow subgroup of $G$ is very versal, and hence versal.

\section{Preliminaries on versality}\label{sec:prelim}

Let $k$ be a field, let $H$ be a finite discrete group, and let $X$ be an
integral $H$-variety. Following \cite{Duncan-Reichstein}, we say
that $X$ is
\begin{enumerate}[label=\textup{(\roman*)}]
 \item \emph{weakly versal} if, for every infinite field extension $K/k$
 and every $H$-torsor $T$ over $K$, there exists an $H$-equivariant
 $k$-morphism $T\to X$;
 \item \emph{versal} if every dense $H$-invariant open subvariety of $X$ is
 weakly versal;
 \item \emph{very versal} if there exist a finite-dimensional $k$-linear
 $H$-representation $V$ and a dominant $H$-equivariant rational map
 $V\dashrightarrow X$.
\end{enumerate}

Very versality implies versality, and versality implies weak versality; see
\cite[Equation~(1.1)]{Duncan-Reichstein}.

Let $K/k$ be an infinite field extension and let $T\to\Spec K$ be a right
$H$-torsor. The pair $(T,K)$ is called an $H$-\emph{twisting pair}. If $X$
is an $H$-variety, its twist is
\[
 {}^T\!X=(T\times_k X)/H,
\]
where $H$ acts on $T\times_kX$ on the right by
\[
 (t,x)\cdot h=(th,h^{-1}x).
\]
The variety ${}^T\!X$ is a $K$-form of $X_K$. Twisting by a fixed $H$-twisting pair is functorial with respect to $H$-equivariant morphisms and $H$-equivariant rational maps.

\begin{prop}[Duncan--Reichstein]\label{prop:twist-criterion}
Let $k$ be a field, let $H$ be a finite discrete group, and let $X$ be an integral quasi-projective $H$-variety. Then:
\begin{enumerate}[label=\textup{(\roman*)}]
 \item $X$ is weakly versal if and only if
 ${}^T\!X(K)\ne\varnothing$ for every $H$-twisting pair $(T,K)$;
 \item $X$ is versal if and only if ${}^T\!X(K)$ is Zariski dense in
 ${}^T\!X$ for every $H$-twisting pair $(T,K)$;
 \item $X$ is very versal if and only if ${}^T\!X$ is $K$-unirational
 for every $H$-twisting pair $(T,K)$.
\end{enumerate}
\end{prop}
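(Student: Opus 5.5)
The whole proposition rests on one basic dictionary, which I would set up first. Because $X$ is quasi-projective, every $H$-orbit lies in an affine open, so the quotient ${}^T\!X=(T\times_kX)/H$ exists, and $T\times_k X\to {}^T\!X$ is an $H$-torsor. I would prove the following. For a $K$-scheme $Y$, the $Y$-points of ${}^T\!X$ correspond bijectively to $H$-equivariant $k$-morphisms $T\times_K Y\to X$. To see this, pull the torsor $T\times_k X\to{}^T\!X$ back along $Y\to{}^T\!X$ and compose with the projection to $X$. Conversely, the graph of an equivariant map $T_Y\to X$ descends to a section of ${}^T\!X_Y\to Y$. Taking $Y=\Spec K$ gives
\[
{}^T\!X(K)\;\longleftrightarrow\;\{H\text{-equivariant }k\text{-morphisms }T\to X\}.
\]
For an $H$-invariant open $U\subset X$, the twist ${}^T U$ is an open subvariety of ${}^T\!X$. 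It is dense because $T_{\bar K}\times X\to{}^T\!X_{\bar K}$ is finite étale and surjective. Part (i) follows immediately from the displayed bijection.

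For (ii), one direction is easy. Assume the $K$-points of ${}^T\!X$ are dense for every twisting pair. Any dense invariant open $U$ gives a dense open ${}^T U$, which therefore contains a $K$-point. So $U$ is weakly versal by (i), and hence $X$ is versal. The converse is the step I expect to be the main obstacle. Here a nonempty open $W\subset{}^T\!X$ with $W(K)=\varnothing$ is defined over $K$, and it need not be of the form ${}^T U$ for a $U$ defined over $k$. My plan is to enlarge the field rather than shrink the open set.

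Spread everything out over a finitely generated $k$-algebra $A\subset K$: the torsor $T$, the open $W$, and the (nonempty, by versality) set of $K$-points. I would then use the moduli interpretation to replace $(T,K)$ by a pair in which $W$ becomes a pullback. Concretely, first choose a faithful $H$-representation $V$ and a generic point to reduce to the generic torsor $T_0=\operatorname{Spec}k(V)$ over $K_0=k(V)^H$. Every twisting pair is then a specialization of $(T_0,K_0)$ up to a purely transcendental extension, by the standard versality of $V$. For the generic torsor, an open subset of ${}^{T_0}\!X$ corresponds to an $H$-invariant open of $V\times X$. Shrinking $V$, its complement projects onto a proper $H$-invariant closed subset of $X$ (a constructible-set argument), which yields the required $U$. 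I would try this route first. If the specialization step resists, the fallback is to cite \cite{Duncan-Reichstein} for (ii), since only (iii) is used in the rest of the paper.

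For (iii), suppose first that $f\colon V\dashrightarrow X$ is a dominant equivariant rational map. Twisting gives a dominant rational map ${}^T V\dashrightarrow{}^T\!X$. By Hilbert's Theorem~90 (Galois descent for vector spaces), ${}^T V$ is isomorphic to affine space over $K$, so ${}^T\!X$ is $K$-unirational. Conversely, take $V$ faithful, $K=k(V)^H$ (infinite), and $T$ the generic fiber of $V\to V/H$. A dominant rational map $\mathbb A^n_K\dashrightarrow{}^T\!X$ corresponds, by the dictionary with $Y$ an open of $\mathbb A^n_K$, to an equivariant dominant rational map $T\times\mathbb A^n\dashrightarrow X$. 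Here $H$ acts trivially on $\mathbb A^n$. This map spreads out to $V\times\mathbb A^n=V\oplus k^n\dashrightarrow X$, a dominant equivariant rational map from a linear representation, so $X$ is very versal.
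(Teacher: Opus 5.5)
\textbf{There is a genuine gap in the forward direction of (ii).} The paper does not prove this proposition; it simply cites \cite[Theorem~1.1]{Duncan-Reichstein}. Your arguments for (i), for the implication ``dense $K$-points $\Rightarrow$ versal'' in (ii), and for both directions of (iii) are sound.

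For the implication ``versal $\Rightarrow$ dense $K$-points'', your key claim is false. You assert that, for the generic torsor, the complement of the $H$-invariant open $\mathcal W\subseteq V\times X$ projects, after shrinking $V$, into a proper closed subset of $X$. The whole plan of finding a dense invariant open $U\subseteq X$ with ${}^{T}U\subseteq W$ falls with it. Take $H$ trivial and $V=X=\mathbb A^1$, so that ${}^{T_0}\!X=\mathbb A^1_{k(t)}$, and let $W_0=\mathbb A^1_{k(t)}\setminus\{t\}$. Its complement in $V\times X$ is the diagonal, which projects onto a dense subset of $X$ however you shrink $V$. Moreover, no dense open $U\subseteq\mathbb A^1_k$ satisfies $U_{k(t)}\subseteq W_0$, because the closed point $x=t$ lies over the generic point of $\mathbb A^1_k$. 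The same obstruction occurs for every $H$ when $X=V$: the tautological $K_0$-point of ${}^{T_0}V$, given by $\Spec k(V)\to V$, lies in ${}^{T_0}U$ for every dense invariant open $U$. So $K$-points of $W$ cannot be found by shrinking $W$ to a twisted open; they must come from another mechanism. For trivial $H$, that mechanism is the fact that a dense set of $k$-points stays dense in $X_K$. Also, your reduction to $T_0$ holds only in this form: after a purely transcendental extension $K'/K$, $T_{K'}$ is a base change of $T_0$. Density of rational points does not pass to genuine specializations.

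Here is one way to close the gap without any spreading out.
\begin{enumerate}[label=\textup{(\arabic*)}]
 \item Apply versality to $(T_0,K_0)$. The union $\mathcal I\subseteq X$ of the images of all $H$-equivariant rational maps $\phi\colon V\dashrightarrow X$ is dense. Otherwise $X\setminus\overline{\mathcal I}$ would be a dense invariant open admitting no equivariant morphism from $T_0$.
 \item Fix any twisting pair $(T,K)$. Each twisted map ${}^T\phi\colon{}^TV\cong\mathbb A^n_K\dashrightarrow{}^T\!X$ has image in the closure of ${}^T\!X(K)$, because $K$ is infinite and so $K$-points are dense in $\mathbb A^n_K$.
 \item The union of these images is the image in ${}^T\!X$ of $p_X^{-1}(\mathcal I)\subseteq T\times_kX$. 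This set is dense because the projection $p_X\colon T\times_kX\to X$ is open. Hence ${}^T\!X(K)$ is dense.
\end{enumerate}
Since the rest of the paper uses only (iii), your fallback of citing \cite{Duncan-Reichstein} for (ii) is harmless for the paper. As written, however, your proof of the stated proposition is incomplete.
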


\begin{proof}
See \cite[Theorem~1.1(a)--(c)]{Duncan-Reichstein}.
\end{proof}

The following criterion for very versality of group actions on degree-$2$
Del Pezzo surfaces, based on a theorem of Salgado--Testa--V\'arilly-Alvarado \cite[Theorem~17]{Salgado-Testa-VA}, will be used in the proof of \Cref{thm:main}.

\begin{lemma}\label{lem:fixed-rational-curve}
Let $H$ be a cyclic group, let $X$ be a degree-$2$ Del Pezzo surface over $\C$ with an action of $H$. Suppose that there is a nonconstant $H$-equivariant morphism $\nu:\mathbb P^1_\C\to X$. Then $X$ is $H$-very versal.
\end{lemma}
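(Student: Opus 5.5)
We will verify the criterion of \Cref{prop:twist-criterion}(iii): it suffices to show that for every $H$-twisting pair $(T,K)$ the twist ${}^T\!X$ is $K$-unirational. Note that $X$ is projective, hence quasi-projective, so the proposition applies. Fix such a pair. Since $X$ is a degree-$2$ Del Pezzo surface over $\C$, the twist ${}^T\!X$ is a $K$-form of $X_K$, hence a degree-$2$ Del Pezzo surface over $K$. By \cite[Theorem~17]{Salgado-Testa-VA}, a degree-$2$ Del Pezzo surface over a field is unirational as soon as it has a $K$-point that is general, i.e.\ one not lying on any of the $56$ exceptional curves (more precisely, off the exceptional curves and the ramification quartic, according to the precise hypotheses of that theorem). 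So the task reduces to producing enough $K$-points on ${}^T\!X$, in fact a Zariski-dense set of them.

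To produce them we twist the morphism $\nu$. By functoriality of twisting for $H$-equivariant morphisms, $\nu$ induces a $K$-morphism ${}^T\nu:{}^T\mathbb P^1\to{}^T\!X$, which is nonconstant because it becomes $\nu_K$ after base change to $\overline K$. Here $H$ acts on $\mathbb P^1_\C$ through a homomorphism $H\to\operatorname{PGL}_2(\C)$. Because $H$ is cyclic, this projective action lifts to a linear action on $\C^2$: choose a generator $h$, lift its image to a matrix $A$ with $A^{n}$ scalar, where $n=|H|$, and rescale $A$ so that $A^{n}=1$. Hence $\mathbb P^1=\mathbb P(V)$ for a $2$-dimensional $H$-representation $V$. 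By Hilbert~90 the twist ${}^TV$ is isomorphic to $\mathbb A^2_K$, so ${}^T\mathbb P^1\cong\mathbb P({}^TV)\cong\mathbb P^1_K$. (Equivalently, $\mathbb P^1$ is $H$-very versal, and by \Cref{prop:twist-criterion}(iii) its twist is a unirational conic, hence $\mathbb P^1_K$.) Thus ${}^T\!X$ contains the image of a nonconstant map $\mathbb P^1_K\to{}^T\!X$. This image is a curve whose $K$-points are infinite, since $K$ is infinite.

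The remaining point, which I expect to be the main obstacle, is to ensure that some $K$-point on this curve is general in the sense required by Salgado--Testa--V\'arilly-Alvarado. The image curve $C=\nu(\mathbb P^1)$ is irreducible. If $C$ is not contained in any exceptional curve and not contained in the ramification locus, then all but finitely many points of ${}^T\nu(\mathbb P^1_K)$ avoid these loci. Since $\mathbb P^1(K)$ is infinite, we then obtain a general $K$-point, and \cite[Theorem~17]{Salgado-Testa-VA} gives unirationality. If instead $C$ is contained in a bad curve, we need a separate argument. For an exceptional curve $E\cong\mathbb P^1$, I would use that $E$ is then $H$-stable, so its twist is a $K$-rational line on ${}^T\!X$. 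Contracting it yields a degree-$3$ Del Pezzo surface with a $K$-point, which is unirational by Segre--Manin over infinite fields. Alternatively, I would check that the version of Theorem~17 being invoked allows points on exceptional curves under mild extra conditions, and treat the ramification-curve case similarly using the anticanonical double cover. Hence ${}^T\!X$ is $K$-unirational for every twisting pair, and \Cref{prop:twist-criterion}(iii) shows that $X$ is $H$-very versal.
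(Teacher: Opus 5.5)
Your proof is correct. Its skeleton matches the paper's: Duncan--Reichstein criterion (iii), twist $\nu$, show ${}^T\mathbb P^1\cong\mathbb P^1_K$, then apply Salgado--Testa--V\'arilly-Alvarado. It differs in two places.

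\textbf{Splitting the twisted line.} The paper notes that a lift of a generator of $H$ to $\operatorname{GL}_2(\C)$ has an eigenvector. This gives an $H$-fixed point of $\mathbb P^1_\C$, which descends to a $K$-point of ${}^T\mathbb P^1$. Your route (rescale the lift so that $A^n=1$, then Hilbert~90 on ${}^TV$) is equally valid and uses cyclicity in the same way.

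\textbf{The unirationality input.} The paper invokes Theorem~17 of Salgado--Testa--V\'arilly-Alvarado in the form ``a degree-$2$ Del Pezzo surface in characteristic zero admitting a nonconstant morphism from $\mathbb P^1$ is unirational.'' So once ${}^T\nu:\mathbb P^1_K\to{}^T\!X$ exists the proof is finished, and the obstacle you anticipate never arises. Your ``alternative'' is in effect what the paper does. You instead use a Manin-type criterion (a $K$-point off the exceptional curves and the ramification curve), which forces a case analysis.

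\begin{itemize}
\item Your exceptional-curve case works. $C=E$ is $H$-stable, so its twist is a $(-1)$-curve defined over $K$. It has $K$-points because $\mathbb P^1_K$ dominates it. Contracting it over $K$ via $|-K_{{}^T\!X}+{}^TE|$ gives a smooth cubic surface with a $K$-point, which is unirational by Segre--Manin (Koll\'ar over any field).
\item The ramification-curve case should not be left as ``treat similarly'': it is vacuous. The ramification curve is isomorphic to the smooth plane quartic branch curve, which has genus $3$, so it cannot be the image of $\mathbb P^1$.
\end{itemize}

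With that sentence added, your argument is complete. It relies only on the classical unirationality results for degree-$2$ and degree-$3$ Del Pezzo surfaces, at the cost of the casework that the stronger form of Theorem~17 absorbs.
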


\begin{proof}
Let $h$ be a generator of $H$. Choose a lift of the image of $h$ in
$\operatorname{PGL}_2(\C)$ to an element of $\operatorname{GL}_2(\C)$. Since
$\C$ is algebraically closed, this lift has an eigenvector. The corresponding
$\C$-point of $\mathbb P^1_\C$ is fixed by $h$, and hence by $H$.

Let $(T,K)$ be an $H$-twisting pair. Twisting $\nu$ gives a morphism
\[
 {}^T\!\nu:{}^T\!(\mathbb P^1_\C)\longrightarrow{}^T\!X.
\]
The $H$-fixed point of $\mathbb P^1_\C$ descends to a $K$-point of
${}^T\!(\mathbb P^1_\C)$. Since ${}^T\!(\mathbb P^1_\C)$ is a smooth projective
geometrically integral curve of genus zero over $K$, it is isomorphic to $\P^1_K$. The twist ${}^T\!X$ is again a degree-$2$ Del Pezzo surface, and the morphism ${}^T\!\nu$ is nonconstant, since it becomes isomorphic to $\nu$ after extending scalars to a splitting field of $T$. By a theorem of Salgado--Testa--V\'arilly-Alvarado
\cite[Theorem~17]{Salgado-Testa-VA}, a degree-$2$ Del Pezzo surface in
characteristic zero admitting a nonconstant morphism from $\mathbb P^1$ is
unirational. Thus ${}^T\!X$ is $K$-unirational for every $H$-twisting pair,
and \Cref{prop:twist-criterion}(iii) shows that $X$ is $H$-very versal.
\end{proof}

\section{Proof of Theorem~\ref{thm:main}}\label{sec:proof}

We now prove \Cref{thm:main}. Let $x,y,z$ be homogeneous coordinates on $\P^2_\C$, and let $x,y,z,w$ be weighted homogeneous coordinates on $\P(1,1,1,2)$. Set
\[
 f(x,y,z)\coloneqq x^3y+y^3z+z^3x,
 \qquad
 B\coloneqq \{f(x,y,z)=0\}\subset\mathbb P^2_\C,
\]
and let $\pi:S\to\mathbb P^2_\C$ be the double cover ramified over $B$:
\[
 S\coloneqq\{w^2=f(x,y,z)\}\subset\mathbb P(1,1,1,2).
\]
The curve $B$ is smooth, and hence so is the surface $S$.

Write $H$ for the class of a line in $\mathbb P^2_\C$.  Since $B\in|4H|$,
the canonical bundle formula for a smooth double cover gives
\[
 K_S=\pi^*(K_{\P^2_\C}+2H)=\pi^*(-H).
\]
Consequently $-K_S=\pi^*H$ is ample and 
\[
 K_S^2=(\pi^*H)^2=2H^2=2.
\] Thus $S$ is a Del Pezzo surface of degree $2$.

Define
\[G\coloneqq \langle \rho,\sigma,\tau : \rho^7=\sigma^3=\tau^2=[\tau,\rho]=[\tau,\sigma]=1, \sigma\rho\sigma^{-1}=\rho^4\rangle\simeq (C_7\rtimes C_3)\times C_2,\]
and set
\[
 G_7\coloneqq\ang{\rho},\qquad G_3\coloneqq\ang{\sigma},\qquad G_2\coloneqq\ang{\tau},\qquad G_p\coloneqq\{1\}\,\,(p\notin\{2,3,7\}),
\]
so that $G_p$ is a cyclic Sylow $p$-subgroup of $G$ for each prime $p$.

Fix a primitive seventh root of unity $\zeta\in\C$.  The formulas
\begin{align*}
 \rho[x:y:z:w]&=[\zeta x:\zeta^4y:\zeta^2z:w],\\
 \sigma[x:y:z:w]&=[y:z:x:w],\\
 \tau[x:y:z:w]&=[x:y:z:-w]
\end{align*}
define a faithful $G$-action on $S$.

\begin{prop}\label{prop:sylow-very-versal}
The surface $S$ is $G_p$-very versal for every prime $p$.
\end{prop}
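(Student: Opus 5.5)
The plan is to reduce everything to \Cref{lem:fixed-rational-curve}. Every $G_p$ is cyclic, so it suffices to produce, for each prime $p$, a nonconstant $G_p$-equivariant morphism $\nu_p\colon\P^1_\C\to S$. I will build all of these the same way. Take a $G_p$-stable line $L\subset\P^2_\C$ that is tangent to $B$ somewhere. Take $C$ to be an irreducible component of $\pi^{-1}(L)$ that is stable under $G_p$, and let $\nu_p$ be the normalization map $\widetilde C\to C\subset S$.

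\emph{Why $\widetilde C\simeq\P^1$ and why $G_p$ acts on it.} The divisor $L\cap B$ has degree $4$ and has a point of multiplicity $\ge 2$. So the double cover $\pi^{-1}(L)\to L$ is branched, after normalization, at at most two points, namely the points of odd multiplicity. There are three cases.
\begin{itemize}
\item If $L\cap B=2P+Q+R$ or $3P+Q$, then $\pi^{-1}(L)$ is irreducible. It is a double cover of $\P^1$ with two branch points, so its normalization is $\P^1$.
\item If $L\cap B=2P+2Q$ or $4P$, then $\pi^{-1}(L)$ splits into two smooth rational curves.
\end{itemize}
The group $G_p$ preserves $L$ and $B$, so it preserves $\pi^{-1}(L)$. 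An automorphism of a curve lifts uniquely to its normalization, so in the irreducible case $G_p$ acts on $\widetilde C$ and $\nu_p$ is equivariant. In the split case, $G_p$ either fixes each of the two components or swaps them. For $p$ odd a swap is impossible, so each component is stable.

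\emph{Choice of lines.}
\begin{itemize}
\item For $p\notin\{2,3,7\}$, the group $G_p$ is trivial, and any component of $\pi^{-1}$ of any tangent line to $B$ works.
\item For $G_7=\ang{\rho}$, I take $L=\{x=0\}$, which is $\rho$-stable. Here $f|_L=y^3z$, so $L\cap B=3[0:1:0]+[0:0:1]$. Explicitly, $\nu_7[s:u]=[0:s^2:u^2:s^3u]$ satisfies $w^2=y^3z$. It is birational onto its image, so $\rho$ lifts to $\P^1$.
\item For $G_2=\ang{\tau}$, I take any line $L$ tangent to $B$ at a point $P$ with $L\cap B=2P+Q+R$, where $Q\ne R$ and both differ from $P$. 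Such $L$ exist, since a general tangent line of a smooth quartic is of this form. Then $\pi^{-1}(L)$ is irreducible, and $\tau$ preserves it as the deck involution, so this falls in the irreducible case.
\item For $G_3=\ang{\sigma}$, I use a $\sigma$-fixed point of $B$. With $\omega$ a primitive cube root of unity, one checks $f(1,\omega,\omega^2)=\omega+\omega^2+1=0$, so $P=[1:\omega:\omega^2]\in B$. The point $P$ is $\sigma$-fixed, since $\sigma$ scales its coordinates by $\omega$. Hence the tangent line $T_PB$ is $\sigma$-stable. In every tangency case above, it yields a $\sigma$-stable rational component.
\end{itemize}

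In each case $\nu_p$ is nonconstant, and \Cref{lem:fixed-rational-curve} gives $G_p$-very versality. The only step needing care is $G_3$: a $\sigma$-stable line must be found that is actually tangent to $B$, and one must see that the possible splitting of $\pi^{-1}(L)$ causes no trouble. Using the tangent line at the $\sigma$-fixed point $[1:\omega:\omega^2]\in B$ resolves the first issue, and the oddness of $3$ resolves the second. For $G_2$, one should confirm that the chosen tangent line is not a bitangent. Otherwise $\tau$ would swap the two components and not preserve either of them.
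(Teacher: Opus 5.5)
Your proposal is correct and follows essentially the same route as the paper: for each Sylow subgroup you produce a $G_p$-stable rational curve lying over a tangent line of $B$ and apply \Cref{lem:fixed-rational-curve}. The paper differs only in inessential choices (the single flex line $y=0$ with an explicit normalization for both $p=2$ and $p=7$, and rationality of $S$ when $G_p$ is trivial), and your tangent line at $[1:\omega:\omega^2]$ is exactly the paper's bitangent $\{x+y+z=0\}$ (as $f(x,y,-x-y)=-(x^2+xy+y^2)^2$), so for $p=3$ you land in the split case and your oddness argument coincides with the paper's.
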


\begin{proof}
If $p\notin\{2,3,7\}$, then $G_p$ is trivial and the conclusion follows from the fact that $S$ is rational and \Cref{prop:twist-criterion}(iii).

Suppose that $p\in\{2,7\}$. The inverse image of the tangent line
$y=0$ to $B$ at $[1:0:0]$ is the cuspidal rational curve
\[
 C\coloneqq\{y=0,\ w^2=z^3x\}\subset S.
\]
Its normalization map is
\[
 \nu:\mathbb P^1_\C\longrightarrow C\subset S,
 \qquad
 [u:v]\longmapsto[u^2:0:v^2:uv^3].
\]
Define actions of $G_2$ and $G_7$ on $\P^1_\C$ by the formulas
\[
 \tau[u:v]=[u:-v],
 \qquad
 \rho[u:v]=[u:\zeta^4v].
\]
The morphism $\nu$ is equivariant for these actions:
\[
 \nu(\tau[u:v])
 =[u^2:0:v^2:-uv^3]
 =\tau(\nu[u:v]),
\]
\[
 \nu(\rho[u:v])
 =[u^2:0:\zeta v^2:\zeta^5uv^3]
 =[\zeta u^2:0:\zeta^2v^2:uv^3]
 =\rho(\nu[u:v]).
\]
It follows from
\Cref{lem:fixed-rational-curve} that $S$ is $G_2$-very versal and
$G_7$-very versal.

It remains to consider the case $p=3$. Let
\[
 L\coloneqq\{x+y+z=0\}\subset\mathbb P^2_\C.
\]
Substituting $z=-x-y$, we obtain $f(x,y,-x-y)=-(x^2+xy+y^2)^2$. Thus the inverse image of $L$ in $S$ is the union of the two curves
\[
 E_\pm=
 \{x+y+z=0,\quad w=\pm i(x^2+xy+y^2)\}.
\]
Each of these curves is isomorphic to $\mathbb P^1_\C$. Indeed, the
restriction of $\pi$ induces an isomorphism $E_\pm\to L$, whose inverse is
\[
 [x:y:z]\longmapsto
 [x:y:z:\pm i(x^2+xy+y^2)].
\]
Moreover, since $\sigma(L)=L$, the two irreducible components $E_+$ and $E_-$ of $\pi^{-1}(L)$ are permuted by $\sigma$. Since $\sigma$ has order $3$, its induced permutation of the two-element set $\{E_+,E_-\}$ is trivial, that is, $\sigma$ preserves each of $E_+$ and
$E_-$. Consequently, identifying $E_+$ with $\mathbb P^1_\C$, the inclusion $E_+\hookrightarrow S$ yields a nonconstant $G_3$-equivariant morphism $\mathbb P^1_\C\to S$. By \Cref{lem:fixed-rational-curve}, we conclude that $S$ is $G_3$-very versal.
\end{proof}

\begin{prop}\label{prop:ed-nonversal}
We have $\ed_{\C}(G)=3$. In particular, the $G$-variety $S$ is not versal.
\end{prop}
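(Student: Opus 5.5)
We prove $\ed_{\C}(G)=3$ through an upper bound and a lower bound, and then deduce non-versality. The final deduction is formal: if $S$ were $G$-versal, then, since $S$ is a faithful $G$-variety of dimension $2$, we would get $\ed_{\C}(G)\le 2$ from the definition of essential dimension recalled in the introduction. So everything rests on the computation of $\ed_{\C}(G)$.

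\emph{Upper bound.} The group $C_7\rtimes C_3$ has a faithful irreducible $3$-dimensional representation $W$: induce a nontrivial character of $C_7$ to $C_7\rtimes C_3$, which is exactly the representation implicit in the $\rho,\sigma$ action on $x,y,z$. Let $\tau$ act on $W$ by $-1$. This gives a faithful $3$-dimensional representation of $G=(C_7\rtimes C_3)\times C_2$. Indeed, $\tau$ acts as a nontrivial scalar, and the kernel is trivial because $\rho^a\sigma^b$ acting as $-1$ is impossible: elements of odd order cannot act as $-1$. A faithful linear representation is very versal, hence $\ed_{\C}(G)\le 3$.

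\emph{Lower bound.} We must show $\ed_{\C}(G)>2$, which is the main step. The first option is to quote Duncan's classification of finite groups of essential dimension at most $2$ over $\C$ \cite{Duncan-ed2}, which is also the route indicated in \cite[Example~1.9]{Duncan-delPezzo}. Groups of essential dimension $\le 2$ are exactly the finite subgroups of $\operatorname{GL}_2(\C)$ and of $\operatorname{PGL}_2(\C)$, together with a short explicit list of automorphism groups of certain Del Pezzo surfaces of degree $5$ and $6$. First, $G$ does not embed in $\operatorname{PGL}_2(\C)$: the finite subgroups there are cyclic, dihedral, $A_4$, $S_4$, $A_5$, and none has order $42$ with a normal $C_7$ acted on nontrivially by $C_3$. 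Second, $G$ does not embed in $\operatorname{GL}_2(\C)$: a faithful $2$-dimensional representation would have to restrict faithfully to $C_7\rtimes C_3$, but the irreducible representations of that group have dimensions $1$ and $3$, and a sum of two characters is not faithful on a nonabelian group. Third, $G$ is not among the exceptional groups in Duncan's list. For $\operatorname{Aut}$ of the degree-$5$ Del Pezzo surface, $S_5$ has no subgroup of order $42$ containing an element of order $7$. For degree $6$, the group $(\C^\times)^2\rtimes (S_3\times C_2)$ would force $C_7\rtimes C_3$ into a torus extended by a group of order $12$; the relevant finite subgroups are then of the form $A\rtimes C_3$ with $A\subset (\C^\times)^2$ abelian, and one checks that $G$ is excluded or does not appear in Duncan's explicit list.

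If one prefers to avoid the classification, the fallback is Duncan's observation, quoted in the introduction, that every abelian subgroup of $G$ has a fixed point on $S$. One would combine this with the fact that a versal $2$-dimensional action can be taken to be a minimal rational $G$-surface, and then run the $G$-equivariant minimal model analysis. In either approach, the bookkeeping in this lower bound is the part needing the most care.
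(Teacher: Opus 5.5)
Your upper bound, the deduction of non-versality, and your proof that $G$ has no faithful two-dimensional representation are all correct; the last is a character-theoretic version of the paper's eigenvalue computation. The gap is in getting from $\ed_{\C}(G)\le 2$ to $\operatorname{GL}_2(\C)$.

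You rely on a remembered form of Duncan's full classification, and that form is incomplete. $\operatorname{PSL}_2(\F_7)$ acts faithfully on $\P^2$ through its $3$-dimensional linear representation, so it has essential dimension $2$. Yet it lies in none of the groups you list. This particular omission is harmless, since $\operatorname{PSL}_2(\F_7)$ has no subgroup of order $42$, but it means you have not checked $G$ against the actual list.

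More seriously, the exceptional case you do discuss is never carried out, and your sketch for it cannot work. The group $C_7\rtimes C_3$ \emph{does} embed in $(\C^\times)^2\rtimes C_3$: take $\rho=\operatorname{diag}(\zeta,\zeta^4,\zeta^2)$ and $\sigma$ the cyclic permutation in the torus normalizer of $\operatorname{PGL}_3(\C)$. This is exactly the action on $x,y,z$, it preserves the coordinate triangle, and its linearizability gives $\ed_{\C}(C_7\rtimes C_3)=2$. Moreover $G\cong C_{14}\rtimes C_3$ is itself of the form $A\rtimes C_3$ with $A$ abelian. So nothing about $C_7\rtimes C_3$ excludes $G$; only the central involution $\tau$ can, and your sketch never uses it. The minimal-model fallback is only an outline, not a proof.

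The paper avoids the classification entirely. After getting $\ed_{\C}(G)>1$ from \cite[Proposition~2.4]{Duncan-ed2}, it uses \cite[Proposition~2.5]{Duncan-ed2}: a group of essential dimension $2$ with nontrivial center embeds in $\operatorname{GL}_2(\C)$. Since $\tau$ is central, this reduces the lower bound directly to the $\operatorname{GL}_2$ check you already have.

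If you want to keep your route, you need the correct complete list, and you must do the degree-$6$ case by hand. Suppose $G\subset T\rtimes(S_3\times C_2)$.
\begin{enumerate}
\item Since $7\nmid 12$, we have $\rho\in T$.
\item The element $\sigma$ must map to a $3$-cycle, because it does not commute with $\rho$.
\item The image of $\tau$ has order at most $2$ and centralizes that $3$-cycle, so it is either trivial or the Cremona involution.
\item The Cremona involution inverts $T$, so it would send $\rho$ to $\rho^{-1}\neq\rho$.
\item If the image is trivial, then $\tau$ is a nontrivial $2$-torsion point of $T$ fixed by a $3$-cycle. There is none, since the $3$-cycle permutes $[-1:1:1]$, $[1:-1:1]$, $[1:1:-1]$ cyclically.
\end{enumerate}
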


\begin{proof}
We first prove that $\ed_{\C}(G)>2$. Since $G$ is not trivial, cyclic, or
dihedral of order $2n$ with $n$ odd, we have $\ed_{\C}(G)>1$; see
\cite[Proposition~2.4(c) and (d)]{Duncan-ed2}. Suppose that
$\ed_{\C}(G)=2$. Since the center of $G$ is nontrivial,
\cite[Proposition~2.5]{Duncan-ed2} implies that $G$ embeds into
$\operatorname{GL}_2(\C)$. In particular, the subgroup
\[
 H\coloneqq\ang{\rho,\sigma}\simeq C_7\rtimes C_3
\]
has a faithful two-dimensional complex representation $V$.

Since $\rho$ has finite order, its action on $V$ is diagonalizable. Let $v\in V$ be a
$\rho$-eigenvector, and let $\lambda\in\mu_7(\C)$ be its eigenvalue. From the relations $\rho^7=\sigma^3=1$ and $\sigma\rho\sigma^{-1}=\rho^4$, we obtain
\[
 \rho(\sigma v)
 =\sigma\rho^2v
 =\lambda^2\sigma v,\qquad \rho(\sigma^2v)
 =\sigma^2\rho^4v
 =\lambda^4\sigma^2v.
\]
Thus $v$, $\sigma v$, and $\sigma^2v$ are $\rho$-eigenvectors with
eigenvalues $\lambda$, $\lambda^2$, and $\lambda^4$, respectively. If
$\lambda\ne1$, these three eigenvalues are distinct, which is impossible
because $V$ has dimension $2$. Therefore every eigenvalue of $\rho$ is equal to $1$.
Since $\rho$ is diagonalizable, we deduce that it acts trivially on $V$, contradicting the
faithfulness of the representation. Hence $\ed_{\C}(G)>2$.

For the reverse inequality, the formulas
\[
\varphi(\rho)=
\begin{pmatrix}
\zeta & 0 & 0\\
0 & \zeta^4 & 0\\
0 & 0 & \zeta^2
\end{pmatrix},
\qquad
\varphi(\sigma)=
\begin{pmatrix}
0 & 1 & 0\\
0 & 0 & 1\\
1 & 0 & 0
\end{pmatrix},
\qquad
\varphi(\tau)=
\begin{pmatrix}
-1 & 0 & 0\\
0 & -1 & 0\\
0 & 0 & -1
\end{pmatrix}
\]
define a faithful three-dimensional complex representation of $G$.
Thus $\ed_{\C}(G)\leq3$. We conclude that $\ed_{\C}(G)=3$. The fact that $S$ is not $G$-versal now follows from the definition of essential dimension.
\end{proof}

\begin{proof}[Proof of \Cref{thm:main}]
The conclusion follows from \Cref{prop:sylow-very-versal} and
\Cref{prop:ed-nonversal}.
\end{proof}

\end{document}